\newtheorem{theorem}{Theorem}[section]
\newtheorem{lemma}[theorem]{Lemma}
\newtheorem{corollary}[theorem]{Corollary}
\newtheorem{proposition}[theorem]{Proposition}
\newtheorem{definition}[theorem]{Definition}
\newcommand{\Z}{{\mathbb Z}}
\newcommand{\N}{{\mathbb N}}
\newcommand{\R}{{\mathbb R}}
\newcommand{\bea}{\begin{eqnarray*}}
\newcommand{\eea}{\end{eqnarray*}}
\newcommand{\be}{\begin{eqnarray}}
\newcommand{\ee}{\end{eqnarray}}
\newcommand{\vol}{\mathrm{vol}\,}
\newcommand{\prob}{\mbox{\rm Prob}\,}
\newcommand{\frob}{\mbox{\rm F}}
\numberwithin{equation}{section}
\begin{document}

\title[Generalized Frobenius numbers]{Generalized Frobenius numbers:\\Bounds and average behavior}

\author{Iskander Aliev}
\address{School of Mathematics and Wales Institute of Mathematical and Computational Sciences, Cardiff University, Senghennydd Road, Cardiff, Wales, UK}
\email{alievi@cf.ac.uk}

\author{Lenny Fukshansky}
\address{Department of Mathematics, 850 Columbia Avenue, Claremont McKenna College, Claremont, CA 91711, USA}
\email{lenny@cmc.edu}

\author{Martin Henk}
\address{Fakult\"at f\"ur Mathematik, Otto-von-Guericke
Universit\"at Mag\-deburg, Universit\"atsplatz 2, D-39106-Magdeburg}
\email{martin.henk@ovgu.de}

\subjclass[2010]{{11D07, 11H06, 52C07, 11D45}}
\keywords{Frobenius number, successive minima, inhomogeneous minimum,
  covering radius, distribution of lattices}

\begin{abstract}
Let $n \geq 2$ and  $s \geq 1$ be integers and $a = (a_1,\dots,a_n)$ be a relatively prime integer $n$-tuple. The $s$-Frobenius number of this $n$-tuple, $\frob_s(a)$, is defined to be the largest positive integer that cannot be represented as $\sum_{i=1}^n a_i x_i$ in at least $s$ different ways, where $x_1,...,x_n$ are non-negative integers. This natural generalization of the classical Frobenius number, $\frob_1(a)$, has been studied recently by a number of authors. We produce new upper and lower bounds for the $s$-Frobenius number by relating it to the so called $s$-covering radius of a certain convex body with respect to a certain lattice; this generalizes a well-known theorem of R. Kannan for the classical Frobenius number. Using these bounds, we obtain results on the average behavior of the $s$-Frobenius number, extending analogous recent investigations for the classical Frobenius number by a variety of authors. We also derive bounds on the $s$-covering radius, an interesting geometric quantity in its own right.
\end{abstract}

\maketitle

\section{Introduction}
\label{intro}

Let $a$ be a positive integral $n$-dimensional primitive vector,
i.e., $a=(a_1,\dots,a_n)^\intercal\in\Z_{>0}^n$ with
$\gcd(a):=\gcd(a_1,\dots,a_n)=1$, so that $a_1 < a_2 < \dots < a_n$. For a positive integer $s$ the
{\em $s$-Frobenius number} $\frob_s(a)$, is the largest number which cannot be
represented in at least $s$ different ways as a non-negative integral combination of the $a_i$'s, i.e.,
\begin{equation*}
  \frob_s(a)=\max\{b\in\Z : \#\{z\in\Z^n_{\geq 0} : \langle a, z\rangle=b\}<s\},
\end{equation*}
where $\langle \cdot, \cdot\rangle$ denotes the standard inner product
on $\R^n$.

This generalized Frobenius number has been introduced and studied by
Beck and Robins \cite{BeckRobins:extension}, who showed, among other results, that for $n=2$
\begin{equation}
      \frob_s(a)=s\,a_1\,a_2-(a_1+a_2).
\label{eq:dim_two_ext}
\end{equation}
In particular, this identity generalizes the well-known result  in the setting of the (classical) Frobenius number
which corresponds to $s=1$. The origin of this classical result is unclear, it was most likely known already to Sylvester, see e.g. \cite{Sylvester}.
The literature on the Frobenius number $\frob_1(a)$ is vast; for a comprehensive and extensive
survey  we refer the reader to the book of Ramirez Alfonsin \cite{RamirezAlfonsin:2005p6858}.
Despite the exact formula in the
case $n=2$, for general $n$ only bounds on the Frobenius
number $\frob_1(a)$ are available. For instance, for $n\geq 3$
\begin{equation}
\left((n-1)!\,a_1\cdot\ldots\cdot a_n\right)^\frac{1}{n-1}-\left(a_1+\cdots +a_n\right)
<\frob_1(a)\leq 2\,a_n\,\left[\frac{a_1}{n}\right]-a_1.
\label{eq:bounds_classic}
\end{equation}
Here the lower bound follows from a sharp lower bound due to Aliev and Gruber \cite{Aliev:2007p6944}, and the
upper bound is due to Erd\H{o}s and Graham \cite{ErdHos:1972p6594}.  Hence, in the
worst case scenario we have an upper bound of the order $|a|_\infty^2$ on
the Frobenius number with respect to the maximum norm of the input
vector $a$. It is worth a mention that an upper bound on $\frob_1(a)$, which is symmetric
in all of the $a_i$'s has recently been produced by Fukshansky and Robins \cite{Fukshansky:2007p6629}. The quadratic order of the upper bound is known to be optimal (see, e.g., \cite{ErdHos:1972p6594})
and in view of the lower bound which is at most of size $|a|_\infty^\frac{n}{n-1}$
 it is quite natural to study the average behavior  of
 $\frob_1(a)$. This research was initiated and strongly influenced by Arnold
 \cite{Arnold:1999p6499}--\cite{Arnold:2006p151}, and due to recent results of Bourgain and Sinai \cite{Bourgain:2007p560},  Aliev and Henk \cite{AlievHenk}, Aliev, Henk and Hinrichs \cite{AHH},
 Marklof \cite{Marklof}, Li \cite{Li}, Shur, Sinai and Ustinov \cite{SSU},  Str\"ombergsson \cite{Str} and Ustinov \cite{Ustinov}
 we have a pretty clear picture of ``the average Frobenius number''. In order
 to describe some of these results, which are going to extend to the
 $s$-Frobenius number $\frob_s(a)$, we need a bit more notation. Let
\begin{equation*}
   \mathrm{G}(T)=\{ a\in\Z^n_{>0}: \gcd(a)=1,\,\vert a\vert_\infty\leq
   T\},
\end{equation*}
be the set of all possible input vectors of the Frobenius problem of
size (in maximum norm) at most $T$. Aliev, Henk and Hinrichs \cite{AHH} showed that
\begin{equation}
 \sup_T  \frac{\sum_{a\in
     \mathrm{G}(T)}\frob_1(a)/\left(a_1\,a_2\cdot\ldots\cdot
     a_n\right)^{\frac{1}{n-1}}}{\#\mathrm{G}(T) }\ll \gg_n1,
\label{eq:average_classic}
\end{equation}
i.e., the expected size of $\frob_1(a)$ is ``close'' to the size of its  lower
bound in~\eqref{eq:bounds_classic}; here and below $\ll_n$ and $\gg_n$
denote the Vinogradov symbols with the constant depending on $n$ only.
Recently, Li \cite{Li} gave the bound
\begin{equation}
 \prob\left(\frob_1(a)/\left(a_1\,a_2\cdot\ldots\cdot
     a_n\right)^{\frac{1}{n-1}} \geq D \right) \ll_n D^{-(n-1)},
\label{eq:prob_classic}
\end{equation}
where $\prob(\cdot)$ is meant with respect to the uniform
distribution among all points in the set $\mathrm{G}(T)$.
The  bound (\ref{eq:prob_classic}) is best possible due to an unpublished
result of Marklof, and clearly implies (\ref{eq:average_classic}).

The main purpose of this paper is to extend the results stated
above, i.e., \eqref{eq:bounds_classic}, \eqref{eq:average_classic} and
\eqref{eq:prob_classic}, to the generalized Frobenius number
$\frob_s(a)$ in the following way:

\begin{theorem} Let $n\geq 2$, $s\geq 1$. Then
\begin{equation*}
\begin{split}
\frob_s(a)&\geq
s^\frac{1}{n-1}\,\left((n-1)!\,a_1\cdot\ldots\cdot
  a_n\right)^\frac{1}{n-1}-\left(a_1+\cdots +a_n\right),\\
\frob_s(a)&\leq
\frob_1(a) + (s-1)^\frac{1}{n-1}\,\left((n-1)!\,a_1\cdot\ldots\cdot a_n\right)^\frac{1}{n-1}.
\end{split}
\end{equation*}
\label{thm:main}
\end{theorem}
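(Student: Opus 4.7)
The plan is to reduce $\frob_s(a)$ to an $s$-covering radius problem, generalizing Kannan's geometric reformulation of the classical Frobenius number, and then to estimate this covering radius from below via volume and from above via an additive inequality. Specifically, I would introduce an $(n-1)$-dimensional simplex $T\subset\R^{n-1}$ and a rank-$(n-1)$ lattice $\Lambda$ built from $a$, normalized so that $\det(\Lambda)/\vol(T)=(n-1)!\,a_1\cdots a_n$, and extend Kannan's argument to establish the identity
\[
\frob_s(a)=\mu_s(T,\Lambda)-(a_1+\cdots+a_n),
\]
where $\mu_s(T,\Lambda)$ denotes the least $\mu>0$ such that every point of $\R^{n-1}$ admits at least $s$ representations $v+\mu t$ with $v\in\Lambda$, $t\in T$. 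For $s=1$ this is Kannan's theorem, and the $s$-generalization should follow from the same bijection between representations of an integer $b$ as $\langle a,z\rangle$ and lattice points in a scaled simplex.

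The lower bound then comes from a pure volume inequality. Integrating the multiplicity function $m(x)=|\{v\in\Lambda:x-v\in\mu_s T\}|$ over a fundamental cell of $\Lambda$ yields
\[
s\det(\Lambda)\le\int m(x)\,dx=\mu_s^{\,n-1}\vol(T),
\]
so $\mu_s(T,\Lambda)\ge s^{1/(n-1)}\bigl((n-1)!\,a_1\cdots a_n\bigr)^{1/(n-1)}$, and substituting into the Kannan-type identity gives the lower bound claimed in the theorem.

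For the upper bound it suffices to show $\mu_s(T,\Lambda)\le\mu_1(T,\Lambda)+\beta$ with $\beta=(s-1)^{1/(n-1)}\bigl((n-1)!\,a_1\cdots a_n\bigr)^{1/(n-1)}$, since $\mu_1(T,\Lambda)=\frob_1(a)+(a_1+\cdots+a_n)$ by the $s=1$ case. Given $p\in\R^{n-1}$, fix $v_0\in\Lambda$ with $p-v_0\in\mu_1 T$; then for any $w\in\Lambda$ with $-w\in\beta T$,
\[
p-(v_0+w)=(p-v_0)-w\in\mu_1 T+\beta T=(\mu_1+\beta)T,
\]
so the task reduces to exhibiting $s$ elements of $\Lambda\cap(-\beta T)$. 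The main obstacle is precisely this last step: because $T$ is a non-symmetric simplex, classical Minkowski-type counting does not immediately guarantee $s$ lattice points in $-\beta T$, and the argument likely requires either an averaging trick over translates of $T$, passage to the symmetric difference body $T-T$, or an appeal to a non-symmetric analogue of Minkowski's first theorem adapted to the simplicial structure of $T$.
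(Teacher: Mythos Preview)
Your overall architecture is exactly the paper's: establish the generalized Kannan identity $\frob_s(a)+a_1+\cdots+a_n=\mu_s(S_a,\Lambda_a)$, then bound $\mu_s$ below by the volume inequality and above by $\mu_1+\beta$. The lower-bound argument you sketch (integrating the multiplicity function over a fundamental domain) is precisely how the paper does it.

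The upper bound, however, has a genuine mis-reduction. You reduce to showing $|\Lambda\cap(-\beta T)|\geq s$, i.e.\ $s$ lattice points in the \emph{fixed} simplex $-\beta T$. With $\beta^{n-1}\vol(T)=(s-1)\det\Lambda$ this is not forced: a convex body of volume $(s-1)\det\Lambda$ need not contain $s$ lattice points in any prescribed translate, so the obstacle you flag is real and this particular reduction is a dead end. The fix is the first of the three options you list, and it dissolves the difficulty completely. By van der Corput's theorem (the same identity you used for the lower bound, read as an averaging statement), a body $L$ with $\vol(L)\geq(s-1)\det\Lambda$ admits \emph{some} point $\bar t$ covered by at least $s$ lattice translates of $L$; equivalently, $|\Lambda\cap(\bar t-\beta T)|\geq s$ for some $\bar t$. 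Now given any $p$, use $\mu_1$ to find $z\in\Lambda$ with $p-\bar t\in z+\mu_1 T$; combining with $\bar t\in z_i+\beta T$ for $i=1,\dots,s$ gives
\[
p\in (z+z_i)+(\mu_1+\beta)T,\qquad 1\leq i\leq s,
\]
so $\mu_s\leq\mu_1+\beta$ with no appeal to symmetry or to the simplicial shape of $T$. This is exactly the argument in the paper's Lemma on the $s$-covering radius; your alternative suggestions (difference body, non-symmetric Minkowski) would at best recover the same inequality with worse or equal constants.
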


\noindent
Bounds with almost the same dependencies on $s$ were recently obtained by
Fukshansky and Sch\"urmann \cite{FS}. Their lower bound, however, is only valid
for sufficiently large $s$.
Aliev and Gruber \cite{Aliev:2007p6944} applied the results of Schinzel \cite{NewSL} to obtain a sharp
lower bound for the Frobenius number in terms of the covering radius of a simplex. The same approach can be used to obtain a sharp lower bound for the $s$-Frobenius number as well. We postpone a detailed discussion of these matters to a future paper.

As an almost immediate consequence of Theorem \ref{thm:main} we obtain:

\begin{corollary} Let $n\geq 3$, $s\geq 1$. Then
\begin{equation*}
\begin{split}
{\rm i)}&\quad\prob\left(\frob_s(a)/\left(s\cdot a_1\,a_2\cdot\ldots\cdot
     a_n\right)^{\frac{1}{n-1}} \geq D \right) \ll_n D^{-(n-1)},\\
{\rm ii)}&\quad \sup_T  \frac{\sum_{a\in
     \mathrm{G}(T)}\frob_s(a)/\left(s\cdot a_1\,a_2\cdot\ldots\cdot
     a_n\right)^{\frac{1}{n-1}}}{\#\mathrm{G}(T) }\ll \gg_n1.
\end{split}
\end{equation*}
\label{cor:main}
\end{corollary}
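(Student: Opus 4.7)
The plan is to deduce both parts of Corollary~\ref{cor:main} from Theorem~\ref{thm:main} together with the tail bound (\ref{eq:prob_classic}) and the average bound (\ref{eq:average_classic}) for the classical Frobenius number. The pivotal observation is that dividing the upper bound in Theorem~\ref{thm:main} by $(s\,a_1\cdots a_n)^{1/(n-1)}$ and using $s\geq 1$ yields the pointwise inequality
\[
\frac{\frob_s(a)}{(s\,a_1\cdots a_n)^{1/(n-1)}}
\leq \frac{\frob_1(a)}{(a_1\cdots a_n)^{1/(n-1)}} + ((n-1)!)^{1/(n-1)},
\]
which will drive both the tail estimate in (i) and the upper estimate in (ii).

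For part (i): if the normalized $s$-Frobenius quotient is at least $D$, then either $D\leq 2\,((n-1)!)^{1/(n-1)}$, in which case the trivial bound $\prob(\cdot)\leq 1$ suffices after enlarging the implied constant, or $\frob_1(a)/(a_1\cdots a_n)^{1/(n-1)}\geq D/2$, and (\ref{eq:prob_classic}) yields $\prob(\cdot)\ll_n (D/2)^{-(n-1)}\ll_n D^{-(n-1)}$. For the upper half of (ii), averaging the displayed inequality over $\mathrm{G}(T)$, dividing by $\#\mathrm{G}(T)$, and invoking the upper half of (\ref{eq:average_classic}) bounds the average by $\ll_n 1$ uniformly in $T$.

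For the matching lower bound in (ii), one observes from the definition of $\frob_s$ that $\frob_s(a)\geq \frob_1(a)$, since any integer lacking a non-negative representation in particular has fewer than $s$ of them. Consequently
\[
\frac{\frob_s(a)}{(s\,a_1\cdots a_n)^{1/(n-1)}} \geq
s^{-1/(n-1)}\,\frac{\frob_1(a)}{(a_1\cdots a_n)^{1/(n-1)}},
\]
and applying the lower half of (\ref{eq:average_classic}) after summing over $\mathrm{G}(T)$ and taking supremum in $T$ yields the required $\gg_n 1$ bound. The only subtle point I anticipate is absorbing the additive constant from Theorem~\ref{thm:main} in the tail estimate, which is precisely what the $D/2$ dichotomy handles without spoiling the $D^{-(n-1)}$ rate. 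If one desires a lower bound in (ii) with constant genuinely independent of $s$, an alternative route uses the lower bound of Theorem~\ref{thm:main} together with the fact that $(a_1+\dots+a_n)/(a_1\cdots a_n)^{1/(n-1)}$ averages to a quantity that tends to $0$ as $T\to\infty$ for $n\geq 3$, so the additive correction becomes negligible after taking $\sup_T$.
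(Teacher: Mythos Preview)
Your treatment of part (i) and of the upper half of part (ii) matches the paper's argument essentially verbatim: the paper uses the same pointwise inequality $X_s(a)\leq s^{-1/(n-1)}X_1(a)+\mathrm{c}_n$ (you drop the harmless factor $s^{-1/(n-1)}\leq 1$) together with \eqref{eq:prob_classic} for (i) and \eqref{eq:average_classic} for the upper bound in (ii).

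There is, however, a gap in your primary argument for the lower bound in (ii). From $\frob_s(a)\geq \frob_1(a)$ you obtain $X_s(a)\geq s^{-1/(n-1)}X_1(a)$, and after averaging and taking $\sup_T$ this gives only $\gg_n s^{-1/(n-1)}$, not $\gg_n 1$; the implied constant now depends on $s$, so the statement as written is not established. The ``alternative route'' you mention at the end is not an alternative but the actual argument required, and it is precisely what the paper does: from the lower bound in Theorem~\ref{thm:main} one gets
\[
\frac{1}{\#\mathrm{G}(T)}\sum_{a\in\mathrm{G}(T)} X_s(a) > \mathrm{c}_n - s^{-1/(n-1)}\,\frac{1}{\#\mathrm{G}(T)}\sum_{a\in\mathrm{G}(T)}\frac{a_1+\cdots+a_n}{(a_1\cdots a_n)^{1/(n-1)}},
\]
and the paper invokes \cite{AHH} (proof of Proposition~1 there) for the bound $\leq \mathrm{C}_n\,T^{-1/(n-1)}$ on the last average, so that for $T$ large the right-hand side exceeds $\mathrm{c}_n/2$, independently of $s$. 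You should promote this to the main argument and supply (or cite) the decay estimate for that average; the $\frob_s\geq\frob_1$ observation does not suffice.
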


\noindent
Hence in this generalized setting the average $s$-Frobenius number is of
the size $\left(s\cdot a_1\,a_2\cdot\ldots\cdot
  a_n\right)^{\frac{1}{n-1}}$, which again is the size of its lower
bound as stated in Theorem \ref{thm:main}.

The proof of Theorem~\ref{thm:main} is based on a generalization of a result of
Kannan which relates the classical Frobenius number to the covering radius of a
certain simplex with respect to a certain lattice. In our setting we
need a kind of generalized covering radius, whose definition as well
as  some properties and  background information from the
Geometry of Numbers will be given in Section~\ref{s-cover}. In Section~\ref{kannan}  we will
prove, analogously to the mentioned result of Kannan, an identity  between
$\frob_s(a)$ and this generalized covering radius and will present a proof
of Theorem~\ref{thm:main}. The last section contains a proof of
Corollary~\ref{cor:main}.
\bigskip

\section{The $s$-covering radius}
 \label{s-cover}

In what follows, let $\mathcal{K}^n$ be the space of all full-dimensional convex
bodies, i.e., closed bounded convex sets with non-empty interior in
the $n$-dimensional Euclidean space $\R^n$. The volume of a set
$X\subset\R^n$, i.e., its $n$-dimensional
Lebesgue measure, is denoted by $\vol X$.
Moreover, we denote by
$\mathcal{L}^n$ the set of all $n$-dimensional lattices in $\R^n$,
i.e., $\mathcal{L}^n=\{B\,\Z^n : B\in\R^{n\times n},\,\det
B\ne 0\}$. For $\Lambda=B\,\Z^n\in\mathcal{L}^n$, $\det\Lambda=|\det B|$ is
called the determinant of the lattice $\Lambda$.  Here we are
interested in the following quantity:

\begin{definition} Let $s\in\N$, $s\geq 1$. For $K\in\mathcal{K}^n$
  and $\Lambda\in\mathcal{L}^n$ let
\begin{equation*}
\begin{split}
 \mu_s(K,\Lambda)  =\min\{\mu > 0 : &\text{ for all } t\in\R^n \text{ there
   exist } b_1,\dots,b_s\in\Lambda \\ &\text{ such that } t\in
 b_i+\mu K\ \forall\ 1\leq i\leq s\}
\end{split}
\end{equation*}
be the smallest
positive number $\mu$ such that any $t\in\R^n$ is covered by at least $s$
lattice translates of $\mu\, K$. $\mu_s(K,\Lambda)$ is called the
\emph{$s$-covering radius} of $K$ with respect to $\Lambda$.
\end{definition}
For $s=1$ we get the well-known covering radius, for the information about which we refer the reader to
Gruber \cite{peterbible} and Gruber and Lekkerkerker \cite{GL}. These books also serve as excellent sources for more
 information on lattices and convex bodies in the context
of Geometry of Numbers.

Note that the $s$-covering radius is different from the $j$th covering minimum introduced by Kannan and Lov\'asz \cite{Kannan-Lovasz}.
We also remark that $\mu_s(K,\Lambda)$ may be described
equivalently as  the smallest positive number $\mu$ such that any translate of $\mu K$
contains at least $s$ lattice points, i.e.,
\begin{equation}
 \mu_s(K,\Lambda)=\min\{\mu>0 : \#\{ (t+\mu K)\cap\Lambda\}\geq s\text{
   for all }t\in\R^n\}.
\label{eq:s-covering}
\end{equation}

\begin{lemma} Let $s\in\N$, $s\geq 1$,  $K\in\mathcal{K}^n$
  and let $\Lambda\in\mathcal{L}^n$. Then
\begin{equation*}
       s^\frac{1}{n}\left(\frac{\det\Lambda}{\vol K}\right)^\frac{1}{n}\leq
       \mu_s(K,\Lambda) \leq  \mu_1(K,\Lambda) + (s-1)^\frac{1}{n}\left(\frac{\det\Lambda}{\vol K}\right)^\frac{1}{n}.
\end{equation*}
\label{lem:s-covering}
\end{lemma}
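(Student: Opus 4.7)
The plan is to handle the two inequalities separately, using volume averaging for the lower bound and a Blichfeldt-type argument combined with convexity for the upper bound; throughout, write $\mu_1 = \mu_1(K,\Lambda)$ for brevity. For the lower bound I would apply the equivalent definition~\eqref{eq:s-covering} together with the standard unfolding identity over a fundamental domain. Fix any fundamental domain $F$ of $\Lambda$, so that $\vol F = \det\Lambda$, and put $N(t) := \#\{(t + \mu K) \cap \Lambda\}$ for $\mu = \mu_s(K,\Lambda)$. Swapping sum and integral, and using that $\{F - \lambda : \lambda \in \Lambda\}$ tiles $\R^n$, yields
\[
\int_F N(t)\,dt \;=\; \vol(\mu K) \;=\; \mu^n\,\vol K.
\]
Since $N(t) \geq s$ for every $t \in \R^n$ by~\eqref{eq:s-covering}, integrating gives $s \det\Lambda \leq \mu_s(K,\Lambda)^n\,\vol K$, which is the claimed lower bound.

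For the upper bound, set $r = (s-1)^{1/n}(\det\Lambda/\vol K)^{1/n}$ so that $\vol(rK) = (s-1)\det\Lambda$, and put $\mu = \mu_1 + r$. Given any $t \in \R^n$, I want to produce $s$ distinct lattice points in $t + \mu K$. The key ingredient is the compact form of the generalized Blichfeldt lemma: if $C \subset \R^n$ is compact with $\vol C \geq m \det\Lambda$ for a positive integer $m$, then $C$ contains $m+1$ distinct points whose pairwise differences all lie in $\Lambda$. Applying this to $C = rK$ with $m = s-1$ produces distinct points $x_0, x_1, \ldots, x_{s-1} \in rK$ with $x_i - x_0 \in \Lambda$ for each $i$. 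The definition of $\mu_1$ supplies a lattice point $\lambda_0 \in t + x_0 + \mu_1 K$; then the $s$ distinct points $\lambda_i := \lambda_0 + (x_i - x_0) \in \Lambda$ satisfy
\[
\lambda_i - t \;=\; (\lambda_0 - t - x_0) + x_i \;\in\; \mu_1 K + rK \;=\; (\mu_1 + r)\,K \;=\; \mu K,
\]
where the Minkowski identity $\alpha K + \beta K = (\alpha + \beta) K$ for $\alpha, \beta \geq 0$ uses the convexity of $K$. Thus $t + \mu K$ contains at least $s$ lattice points, giving $\mu_s(K,\Lambda) \leq \mu$.

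The main obstacle, as I see it, is pinning down the correct form of Blichfeldt's theorem: the standard version yields only two points with a common lattice difference, and the optimal exponent $(s-1)^{1/n}$ depends both on the extension to $s$ points whose pairwise differences \emph{all} lie in $\Lambda$ and on the non-strict inequality $\vol C \geq m \det\Lambda$ that is available because $rK$ is compact (the strict version would force an arbitrarily small loss). Once this ingredient is in hand, the remaining pieces — the unfolding identity for the lower bound and the Minkowski sum identity for convex sets in the upper bound — are routine.
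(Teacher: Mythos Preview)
Your proof is correct and follows essentially the same route as the paper. The paper first reduces to $\Lambda=\Z^n$, proves the lower bound via the identical unfolding/averaging computation (written out with characteristic functions), and for the upper bound invokes van der Corput's theorem---which is exactly your ``generalized Blichfeldt'' statement in dual form (a point covered by $s$ translates of $L$ is the same datum as $s$ points of $L$ pairwise congruent modulo the lattice)---and then shifts to an arbitrary target using $\mu_1$, just as you do. Your concern about needing compactness to use the non-strict inequality $\vol(rK)\geq(s-1)\det\Lambda$ is well placed; the paper handles this the same way, taking $L\in\mathcal{K}^n$.
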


\begin{proof} It suffices to prove these inequalities for the standard lattice
  $\Z^n$ of  determinant 1; for brevity, we will just write
  $\mu_s$ instead of $\mu_s(K,\Z^n)$. The lower bound just reflects
  the fact that each point of $\R^n$ is covered at least $s$ times by
  the lattice translates of $\Z^n+\mu_s\,K$. A standard argument to
  see this in a more precise way is the following. Let
  $P=[0,1)^n$ be the half open cube of edge length $1$, and for
  $L \subseteq\R^n$ let $\chi_L : \R^n\to\{0,1\}$ be its characteristic
  function, i.e., $\chi_L(x)=1$  if $x\in L$, otherwise it is
  $0$. Then with $L=\mu_s\,K$ we get
\begin{equation}
\label{eq:vander}
\begin{split}
\vol(L)&=\int_{\R^n}\chi_{L}(x)\,\mathrm{d}x =
\int_{\Z^n+P}\chi_{L}(x)\,\mathrm{d}x
=\sum_{z\in\Z^n} \int_{z+P}\chi_{L}(x)\,\mathrm{d}x\\
&=\sum_{z\in\Z^n}\int_{P}\chi_{-z+L}(x)\,\mathrm{d}x
=\int_P\left(\sum_{z\in\Z^n}\chi_{-z+L}(x)\right)
\,\mathrm{d}x\\&\geq \int_P s
\,\mathrm{d}x =s.
\end{split}
\end{equation}
Hence $\vol(\mu_s\,K)\geq s$. Combining this observation with
the homogeneity of the volume we obtain the lower
bound. For the upper bound we may assume $s\geq 2$, since there
is nothing to prove for $s=1$. The first two lines of \eqref{eq:vander} also
prove a well-known result of van der Corput \cite[pp. 47]{GL}, which in
our setting of a convex body says: if $L\in\mathcal{K}^n$ with $\vol(L)\geq s-1$ then
there exists a $t\in P$ such that $t$ is covered by at least $s$
lattice translates of $L$. Hence for $\overline{\mu}= ((s-1)/\vol(K))^{1/n}$
we know that there exist   $z_1,\dots, z_s\in \Z^n$ and a $\overline{t}\in P$
such that $\overline{t}\in z_i+\overline{\mu}K$, $1\leq i\leq s$. Now
given an arbitrary $t\in\R^n$ we know by the definition of the
covering radius $\mu_1$ that there exists a $z\in\Z^n$ such that
$t-\overline{t}\in z+\mu_1\,K$. Hence
\begin{equation*}
  t\in (z+z_i) + (\mu_1+\overline{\mu})\,K,\quad 1\leq i\leq s,
\end{equation*}
 and so $\mu_s\leq \mu_1+\overline{\mu}$ which gives the upper bound.
\end{proof}

It is also worth a mention that, as an immediate corollary of
Lemma~\ref{lem:s-covering} and tools from the Geometry of Numbers, we
can obtain upper bounds on $\mu_s(K,\Lambda)$ for any $s \geq 1$ in
terms of successive minima of $K$ with respect to $\Lambda$. Recall
that successive minima $\lambda_i(K,\Lambda)$  of a convex body $K\in\mathcal{K}^n$ with
respect to a lattice $\Lambda\in\mathcal{L}^n$ are defined by
$$\lambda_i(K,\Lambda) = \min \left\{ \lambda > 0 : \dim \left(
    \lambda (K-K) \cap \Lambda \right) \geq i \right\}, 1\leq i\leq n.
$$

\begin{proposition}  Let $s\in\N$, $s\geq 1$,  $K\in\mathcal{K}^n$
  and let $\Lambda\in\mathcal{L}^n$. Then
\begin{equation*}
     \mu_s(K,\Lambda)\leq \left(1+(n!^{1/n}/n) (s-1)^{1/n}\right)\sum_{i=1}^n\lambda_i(K,\Lambda).
\end{equation*}
\end{proposition}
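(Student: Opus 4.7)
The plan is to derive the bound from Lemma~\ref{lem:s-covering} by estimating its two right-hand-side terms via classical tools from the Geometry of Numbers applied to the centrally symmetric body $K-K$. Lemma~\ref{lem:s-covering} yields
\begin{equation*}
\mu_s(K,\Lambda) \le \mu_1(K,\Lambda) + (s-1)^{1/n}\left(\frac{\det\Lambda}{\vol K}\right)^{1/n},
\end{equation*}
so it suffices to prove the two estimates
\begin{equation*}
\mu_1(K,\Lambda) \le \sum_{i=1}^n \lambda_i(K,\Lambda)
\quad\text{and}\quad
\left(\frac{\det\Lambda}{\vol K}\right)^{1/n} \le \frac{(n!)^{1/n}}{n}\sum_{i=1}^n \lambda_i(K,\Lambda),
\end{equation*}
and then add them.

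For the first estimate I would invoke a Jarnik-type bound on the covering radius. Since $\lambda_i(K,\Lambda)$ is by definition the ordinary $i$-th successive minimum of the 0-symmetric body $K-K$, this reduces to Jarnik's classical inequality applied to $K-K$, $\mu_1(K-K,\Lambda)\le \tfrac{1}{2}\sum_{i=1}^n \lambda_i^{\rm std}(K-K,\Lambda)$, combined with the standard comparison between the covering radii of $K$ and of its difference body.

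For the second estimate I would use Minkowski's second theorem applied to $K-K$: the identity $\lambda_i(K,\Lambda)=\lambda_i^{\rm std}(K-K,\Lambda)$ gives $\lambda_1\cdots\lambda_n\,\vol(K-K) \ge (2^n/n!)\det\Lambda$. Relating $\vol(K-K)$ to $\vol K$ (via the identity $\vol(K-K)=2^n\vol K$ in the 0-symmetric case, or via Rogers--Shephard more generally) yields $\det\Lambda/\vol K \le n!\,\lambda_1\cdots\lambda_n$, and the AM-GM inequality $(\lambda_1\cdots\lambda_n)^{1/n}\le \frac{1}{n}\sum_i\lambda_i$ then produces the required factor $(n!)^{1/n}/n$. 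Substituting the two estimates into Lemma~\ref{lem:s-covering} completes the proof.

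The principal obstacle will be the Jarnik-type step, since Jarnik's classical inequality is naturally stated for 0-symmetric bodies and must be carefully transferred from $K-K$ to the possibly non-symmetric $K$ with the correct constant; the volume-ratio bound, by contrast, is a routine algebraic consequence of Minkowski's second theorem and AM-GM.
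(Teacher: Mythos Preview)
Your overall architecture---start from Lemma~\ref{lem:s-covering}, bound $\mu_1$ by a Jarnik-type inequality, bound $(\det\Lambda/\vol K)^{1/n}$ via a Minkowski-type inequality, then apply AM--GM---is exactly what the paper does. The difference is in how the two auxiliary inequalities are obtained. The paper simply quotes the non-symmetric versions directly: Kannan--Lov\'asz \cite[Lemma~2.4]{Kannan-Lovasz} for
\[
\mu_1(K,\Lambda)\le \sum_{i=1}^n \lambda_i(K,\Lambda),
\]
and the extension of Minkowski's lower bound to arbitrary bodies \cite[p.~59]{GL}, \cite{Henze} for
\[
\vol(K)\prod_{i=1}^n \lambda_i(K,\Lambda)\ge \frac{1}{n!}\det\Lambda.
\]
You instead try to recover both from the 0-symmetric case applied to $K-K$.

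The genuine gap is in the step you called ``routine.'' From Minkowski's second theorem on $K-K$ you get $\det\Lambda \le \tfrac{n!}{2^n}\,\vol(K-K)\prod_i\lambda_i$. In the symmetric case $\vol(K-K)=2^n\vol K$ and you recover $\det\Lambda/\vol K \le n!\prod_i\lambda_i$ exactly. But for general $K$ the Rogers--Shephard inequality only gives $\vol(K-K)\le \binom{2n}{n}\vol K$, and hence
\[
\frac{\det\Lambda}{\vol K}\le \frac{n!}{2^n}\binom{2n}{n}\prod_{i=1}^n\lambda_i
=\frac{(2n)!}{2^n\,n!}\prod_{i=1}^n\lambda_i,
\]
with $\dfrac{(2n)!}{2^n\,n!}=(2n-1)!!>n!$ for all $n\ge 2$. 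So your symmetrization route does \emph{not} yield the constant $(n!)^{1/n}/n$ in the proposition; it proves only a strictly weaker bound. To get the stated constant you really need the non-symmetric Minkowski lower bound itself, which is what the paper invokes. (Ironically, the Jarnik step you flagged as the principal obstacle is also handled in the paper by a direct citation of the non-symmetric version, so neither of your symmetrization detours is actually needed.)
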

\proof It was pointed out  by Kannan and Lovasz \cite[Lemma 2.4]{Kannan-Lovasz}
that Jarnik's inequalities, relating the covering radius
and the successive minima of $0$-symmetric convex bodies, are also
valid for arbitrary bodies. Hence we have
\begin{equation}
  \mu_1(K,\Lambda)\leq \sum_{i=1}^n\lambda_i(K,\Lambda).
\label{eq:jarnik_up}
\end{equation}
On the other hand it is also well known that Minkowski's theorems on
successive minima can also be extended to the family of arbitrary
convex bodies \cite[pp.~59]{GL}, \cite{Henze},
and, in particular, we have
\begin{equation}
  \vol(K) \prod_{i=1}^n \lambda_i(K,\Lambda) \geq
  \frac{1}{n!}\,\det\Lambda.
\label{eq:minkowski_lower}
\end{equation}
Applying \eqref{eq:jarnik_up} and  \eqref{eq:minkowski_lower} to the
upper bound on $\mu_s(K,\Lambda)$ in Lemma \ref{lem:s-covering} leads
to
\begin{equation*}
\begin{split}
\mu_s(K,\Lambda) & \leq  \mu_1(K,\Lambda) +
(s-1)^\frac{1}{n}\left(\frac{\det\Lambda}{\vol K}\right)^\frac{1}{n}\\
&\leq \sum_{i=1}^n\lambda_i(K,\Lambda) +
(s-1)^\frac{1}{n}\left(n!\,\prod_{i=1}^n
  \lambda_i(K,\Lambda)\right)^\frac{1}{n}\\ &\leq
 \left(1+(n!^{1/n}/n) (s-1)^{1/n}\right)\sum_{i=1}^n\lambda_i(K,\Lambda),
\end{split}
\end{equation*}
by the arithmetic-geometric mean inequality.
\endproof

Unfortunately, we are not aware of a nice generalization of Jarnik's
lower bound (cf.~\cite[Lemma 2.4]{Kannan-Lovasz})
$\mu_1(K,\Lambda)\geq \lambda_n(K,\Lambda)$ to the $s$-covering radius.




\bigskip

\section{Frobenius number and covering radius}
\label{kannan}

For a given primitive positive vector $a=(a_1,\dots,a_n)^\intercal
\in\Z^n_{>0}$ let
\begin{equation*}
S_a = \left\{ x \in \R_{\geq 0}^{n-1} : a_1\,x_1+\cdots +a_{n-1}\,x_{n-1}\leq 1 \right\}
\end{equation*}
be the $(n-1)$-dimensional simplex with vertices
$0,\frac{1}{a_i}\,e_i$ where $e_i$ is the $i$-th unit vector in
$\R^{n-1}$, $1\leq i\leq n-1$. Furthermore, we consider the following
sublattice of $\Z^{n-1}$
\begin{equation*}
\Lambda_a = \left\{ z\in\Z^{n-1} : a_1\,z_1+\cdots + a_{n-1}\,z_{n-1}\equiv 0 \bmod a_n \right\}.
\end{equation*}
This simplex and lattice were introduced by Kannan in his studies of
the Frobenius number \cite{Kannan}, where he proved the following beautiful identity:
\begin{equation*}
       \mu_1(S_a,\Lambda_a)= \frob_1(a)+a_1+\cdots +a_n.
\end{equation*}
Here we just extend his arguments to the $s$-Frobenius number. We
start with the following lemma about an ``integral version'' of $\mu_s(S_a,\Lambda_a)$.

\begin{lemma} Let $n\geq 2$, $s\geq 1$, and let
$$\mu_s(S_a,\Lambda_a;\Z^{n-1})=\min \left\{ \rho>0 :
   \#\left\{(z+\rho\,S_a)\cap\Lambda_a\right\}\geq s\ \forall\ z\in\Z^{n-1} \right\}.$$
 Then
\begin{equation*}
  \mu_s(S_a,\Lambda_a;\Z^{n-1})= \frob_s(a)+a_n.
\end{equation*}
\label{lem:kannan_gen}
\end{lemma}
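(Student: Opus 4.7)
The plan is to mirror Kannan's covering-radius identity for $s=1$ by keeping track of residues modulo $a_n$ and the multiplicities that arise. For $z\in\Z^{n-1}$ and $\rho>0$, the substitution $w=y-z$ yields
\begin{equation*}
\#\bigl((z+\rho S_a)\cap\Lambda_a\bigr)=N(\rho,c),\qquad c\equiv-\sum_{i=1}^{n-1}a_iz_i\bmod a_n,
\end{equation*}
where $N(\rho,c):=\#\{w\in\Z^{n-1}_{\geq 0}:\sum_{i<n}a_iw_i\leq\rho,\ \sum_{i<n}a_iw_i\equiv c\bmod a_n\}$. Because $\gcd(a_1,\dots,a_n)=1$, the residues $-\sum_{i<n}a_iz_i\bmod a_n$ hit every element of $\Z/a_n\Z$ as $z$ varies over $\Z^{n-1}$, so $\mu_s(S_a,\Lambda_a;\Z^{n-1})$ equals the least $\rho>0$ for which $N(\rho,c)\geq s$ holds simultaneously for every residue class $c\in\{0,\dots,a_n-1\}$.

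Next I would identify $N(\rho,c)$ with a Frobenius-type counting function. For any integer $b\geq 0$ with $b\equiv c\bmod a_n$, the map $(w_1,\dots,w_{n-1})\mapsto\bigl(w_1,\dots,w_{n-1},(b-\sum_{i<n}a_iw_i)/a_n\bigr)$ is a bijection from the set counted by $N(b,c)$ onto the set of representations of $b$ using $a_1,\dots,a_n$; thus $N(b,c)=R(b)$, where $R(b):=\#\{x\in\Z^n_{\geq 0}:\langle a,x\rangle=b\}$. Since the values $\sum_{i<n}a_iw_i$ lying in the class $c\bmod a_n$ form a subset of $\{c,c+a_n,c+2a_n,\dots\}$, the function $\rho\mapsto N(\rho,c)$ is constant on each open interval between consecutive such integers, giving $N(\rho,c)=R(b_c(\rho))$ for $b_c(\rho)$ the largest integer $\equiv c\bmod a_n$ with $b_c(\rho)\leq\rho$ (and the convention $R(\cdot)=0$ on negatives).

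Writing $F=\frob_s(a)$, I then verify the two inequalities. For $\rho_0=F+a_n$, every $b_c(\rho_0)$ lies in $\{F+1,\dots,F+a_n\}$, hence exceeds $F$, so $N(\rho_0,c)=R(b_c(\rho_0))\geq s$ by definition of $\frob_s$, giving $\mu_s\leq F+a_n$. Conversely, choosing $c$ to be the residue of $F$ modulo $a_n$, for every $\rho<F+a_n$ the value $b_c(\rho)$ is at most $F$ (since the next element of class $c$ above $F$ is $F+a_n$), so by monotonicity of $N(\cdot,c)$ we obtain $N(\rho,c)\leq N(F,c)=R(F)<s$, giving $\mu_s\geq F+a_n$. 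The only step requiring genuine care is the identification $N(b,c)=R(b)$, which rests on checking that the fiber coordinate $w_n=(b-\sum_{i<n}a_iw_i)/a_n$ is indeed a non-negative integer precisely when the congruence and inequality constraints on $w$ hold; once this is in place, the $a_n$-periodic gap between consecutive integers in a fixed residue class does the rest, exactly as in Kannan's original argument for $s=1$.
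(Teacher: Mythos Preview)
Your argument is correct and follows the same route as the paper: both proofs reduce the question, via the substitution $w=y-z$, to studying how many nonnegative solutions of $\sum_{i<n}a_iw_i\leq\rho$ fall into a prescribed residue class modulo $a_n$, use $\gcd(a)=1$ to see that every residue class arises, and then identify this count with the representation function $R(b)$ at the top integer of that class below $\rho$. Your write-up is slightly more explicit in packaging the counting as $N(\rho,c)=R(b_c(\rho))$ and in invoking the monotonicity of $R$ along a fixed residue class (via adding multiples of $a_n$), whereas the paper argues the two inequalities directly by exhibiting or ruling out $s$ lattice points in the translate; but these are cosmetic differences, not a genuinely different method.
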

\begin{proof} To simplify the notation, for each $y\in\R^n$ let
$\tilde{y}=(y_1,\dots,y_{n-1})^\intercal$ be the vector consisting of the
first $(n-1)$ coordinates of $y$. Further, let
$\overline{\mu_s}=\mu_s(S_a,\Lambda_a;\Z^{n-1})$ and
$\frob_s=\frob_s(a)$.

First we show that $\overline{\mu_s}\leq \frob_s +a_n$. To this end,
let $z\in\Z^{n-1}$ and let $k\in\{1,\dots,a_n\}$ be such that
$\tilde{a}^\intercal\,z \equiv -(\frob_s+k) \bmod a_n$. By the
definition of $\frob_s$ we can find $b_1,\dots,b_s\in\Z^{n}_{\geq 0}$ with
$a^\intercal b_i  = \frob_s+k$, $1\leq i\leq s$. Hence we have found
$s$ different lattice vectors $z+\tilde{b_i}\in\Lambda_a$, $1\leq
i\leq s$, and since $\tilde{b_i}\in (\frob_s+k)\,S_a$ we obtain
\begin{equation*}
 z+\tilde{b_i} \in
 z+(\frob_s+a_n)\,S_a,\quad 1\leq i\leq s.
\end{equation*}
Hence $\overline{\mu_s}\leq \frob_s +a_n$, and it remains to show the
reverse inequality.

Since $\gcd(a)=1$, we can find a $z\in\Z^{n-1}$
with $\tilde{a}^\intercal z \equiv \frob_s\bmod a_n$. Now suppose
that for  a $0<\gamma<\frob_s+a_n$ we can find
$g_1,\dots,g_s\in\Lambda_a$ such that $g_i\in z+\gamma\,S_a$. Since
$\tilde{a}^\intercal (g_i-z)\equiv \frob_s\bmod a_n$ and
$\tilde{a}^\intercal (g_i-z)\leq \gamma<\frob_s+a_n$, we conclude that
there exist non-negative integers $m_i$ with
\begin{equation*}
         \tilde{a}^\intercal (g_i-z) = \frob_s- m_i\,a_n,\quad 1\leq
         i\leq s.
\end{equation*}
Since $g_i\in z+\gamma\,S_a$, we conclude that $(g_i-z)$ is a vector with
non-negative integer coordinates, and so
$\tilde{a}^\intercal (g_i-z)+ m_i\,a_n$, $1\leq i\leq s$, are $s$
different non-negative integral representations of
$\frob_s$, which contradicts the definition of $\frob_s$.
This proves that $\overline{\mu_s} \geq \frob_s +a_n$, and completes the
proof of the lemma.
\end{proof}

The next theorem is the canonical extension of  Kannan's Theorem 2.5
in \cite{Kannan} for the classical Frobenius number.

\begin{theorem} Let $n\geq 2$, $s\geq 1$. Then
\begin{equation*}
       \mu_s(S_a,\Lambda_a)= \frob_s(a)+a_1+\cdots +a_n.
\end{equation*}
\label{thm:kannan_gen}
\end{theorem}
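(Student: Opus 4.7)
The plan is to deduce Theorem~\ref{thm:kannan_gen} from Lemma~\ref{lem:kannan_gen} by converting the covering problem for real translates $t+\mu S_a$ into the integer covering problem already handled there. Fix $t\in\R^{n-1}$, set $z=\lceil t\rceil\in\Z^{n-1}$ coordinate-wise, and put $\epsilon_i = z_i - t_i \in [0,1)$. Because $S_a$ has its apex at the origin and sits inside the non-negative orthant, a lattice vector $\lambda \in \Lambda_a$ lies in $t+\mu S_a$ if and only if $\lambda \geq t$ componentwise and $\tilde{a}^\intercal \lambda \leq \mu + \tilde{a}^\intercal t$; since $\lambda$ is integral, the first condition is equivalent to $\lambda \geq z$. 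Writing $w = \lambda - z$, this translates into $w\in\Z^{n-1}_{\geq 0}$ satisfying $\tilde{a}^\intercal w \equiv -\tilde{a}^\intercal z \pmod{a_n}$ and $\tilde{a}^\intercal w \leq \mu - \tilde{a}^\intercal \epsilon$. The first key step is therefore to record the bijection
\[
\#\bigl\{\Lambda_a \cap (t+\mu S_a)\bigr\} = \#\bigl\{\Lambda_a \cap \bigl(z + (\mu - \tilde{a}^\intercal \epsilon)\,S_a\bigr)\bigr\}.
\]

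Write $\mu(t)$ (respectively $\mu(z)$) for the least $\mu>0$ making the left (respectively right) side of this identity at least $s$; the identity then gives $\mu(t) = \mu(z) + \tilde{a}^\intercal \epsilon$. For the upper bound I would invoke Lemma~\ref{lem:kannan_gen}, which yields $\mu(z) \leq \frob_s(a) + a_n$ for every $z \in \Z^{n-1}$, and combine it with $\tilde{a}^\intercal \epsilon \leq a_1+\cdots+a_{n-1}$ (since every $\epsilon_i<1$) to conclude $\mu(t) \leq \frob_s(a) + a_1 + \cdots + a_n$ for all $t$. Hence $\mu_s(S_a, \Lambda_a) = \sup_t \mu(t) \leq \frob_s(a) + a_1+\cdots+a_n$.

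For the matching lower bound I would feed the argument back the specific $z^*\in\Z^{n-1}$ with $\tilde{a}^\intercal z^* \equiv \frob_s(a) \pmod{a_n}$ produced inside the proof of Lemma~\ref{lem:kannan_gen}, for which $\mu(z^*) = \frob_s(a)+a_n$. Taking $t_\eta = z^* - (1-\eta)(1,\ldots,1)^\intercal$ with $\eta \in (0,1)$ forces $\epsilon_i = 1-\eta$ for every $i$, so
\[
\mu(t_\eta) = \frob_s(a) + a_n + (1-\eta)(a_1 + \cdots + a_{n-1}) \longrightarrow \frob_s(a) + a_1+\cdots+a_n
\]
as $\eta \to 0^+$. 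This gives $\mu_s(S_a, \Lambda_a) \geq \frob_s(a) + a_1+\cdots+a_n$, matching the upper bound and completing the proof.

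The part requiring the most care is the bijection at the heart of the argument: one must juggle the positivity condition $\lambda \geq t$ (which ratchets up to $\lambda \geq z$ by integrality), the residue constraint defining $\Lambda_a$, and the linear inequality $\tilde{a}^\intercal \lambda \leq \mu + \tilde{a}^\intercal t$ simultaneously, and repackage them as clean conditions on $w = \lambda - z$. Once this is done, Lemma~\ref{lem:kannan_gen} delivers both directions essentially for free; the small parameter $\eta$ in $t_\eta$ is present only to compensate for the fact that $\sup_{\epsilon \in [0,1)^{n-1}} \tilde{a}^\intercal \epsilon = a_1 + \cdots + a_{n-1}$ is not attained.
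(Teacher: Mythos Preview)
Your proof is correct and follows essentially the same route as the paper. Both arguments reduce to Lemma~\ref{lem:kannan_gen} via the observation that passing from a real translate $t$ to the integer point $z=\lceil t\rceil$ costs at most $a_1+\cdots+a_{n-1}$ in the dilation factor (the paper phrases this geometrically as $[0,1]^{n-1}\subset (a_1+\cdots+a_{n-1})S_a$, you phrase it via the explicit bijection), and both use the same perturbed point $z^*-(1-\eta)(1,\ldots,1)^\intercal$ for the lower bound.
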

\begin{proof}
We keep the notation of Lemma \ref{lem:kannan_gen} and its proof, and
in addition we set $\mu_s=\mu_s(S_a,\Lambda_a)$. In view of Lemma
\ref{lem:kannan_gen}, we have to show that
\begin{equation}
        \mu_s=\overline{\mu_s}+(a_1+\cdots +a_{n-1}).
\label{eq:toshow}
\end{equation}
First we verify the inequality $\mu_s\leq
\overline{\mu_s}+a_1+\cdots +a_{n-1}$. Since the
$(n-1)$-dimensional closed cube $\overline{P}=[0,1]^{n-1}$ of edge-length 1 is contained in
$(a_1+\cdots+a_{n-1})\,S_a$,
\begin{equation*}
\R^{n-1}=\Z^{n-1} + (a_1+\cdots+a_{n-1})\,S_a.
\end{equation*}
Hence, in view of  \eqref{eq:s-covering}, it suffices to verify
that for each $z\in\Z^{n-1}$
\begin{equation*}
    \#\left\{(z+\overline{\mu_s}\,S_a)\cap\Lambda_a\right\}\geq s,
\end{equation*}
which follows by the definition of $\overline{\mu_s}$.

Now suppose $\mu_s<  \overline{\mu_s}+a_1+\cdots +a_{n-1}$. By
Lemma \ref{lem:kannan_gen}, there exists a $z\in\Z^{n-1}$
such that for any subset $I_s\subset\Lambda_a$ of cardinality at least
$s$ there exists a $b\in I_s$ with
$(z-b)\notin\mathrm{int}(\overline{\mu_s}S_a)$, where
$\mathrm{int}(\,)$ denotes the interior of a set. Let $u \in \Z^{n-1}$ be the
vector with all coordinates equal to~1. By our assumption, there exists
at least $s$ lattice points $b_i\in \Lambda_a$, $1\leq i\leq s$, such
that
$(z+u) \in b_i+ \mathrm{int}((\overline{\mu_s}+a_1+\cdots +a_{n-1})S_a)$.
Then this is certainly also true for any sufficiently small positive
$\epsilon$ and the point $z+(1-\epsilon)u$. Thus,  for $1\leq i\leq s$,
\begin{equation*}
\begin{split}
\overline{\mu_s}+a_1+\cdots
+a_{n-1} &>\tilde{a}^\intercal(z+(1-\epsilon)u-b_i)\\ &= \tilde{a}^\intercal(z-b_i)
+ (1-\epsilon)(a_1+\dots+a_{n-1}).
\end{split}
\end{equation*}
Since $\epsilon$ is an arbitrary sufficiently small positive real number, we conclude that
$\tilde{a}^\intercal(z-b_i)<\overline{\mu_s}$, $1\leq i\leq s$. On the
other hand, we have  $z+(1-\epsilon)u-b_i\geq
0$, which implies $z-b_i\geq 0$, $1\leq i\leq s$. In other words, the
$s$ lattice points $b_1,\dots,b_s$ lie in the interior of
$z+\overline{\mu_s}\,S_a$ which contradicts the definition of~$\overline{\mu_s}$.
\end{proof}

We remark that in the case $n=2$, $S_a$ is just the segment
$[0,1/a_1]$ and $\Lambda_a$ is the set of all integral multiplies of $a_2$, i.e.,
$\Lambda_a= \Z\,a_2$. Hence, in this special case,
\begin{equation*}
    \mu_s(S_a,\Lambda_a)= s\,a_1\,a_2,
\end{equation*}
which gives, via Theorem \ref{thm:kannan_gen}, another proof of \eqref{eq:dim_two_ext}.

\begin{proof}[Proof of Theorem \ref{thm:main}] First we observe that
  $\det \Lambda_a=a_n$. This follows, for instance, from the fact that
  there are at most $a_n$ residue classes of the sublattice $\Lambda_a$ with respect
  to $\Z^{n-1}$, and since $\gcd(a)=1$ we have exactly $a_n$ distinct
  residue classes.  Next we note for  the ($(n-1)$-dimensional) volume
  of $S_a$ that
\begin{equation*}
 \vol(S_a)=\frac{1}{(n-1)!} \frac{1}{a_1\cdot\ldots\cdot a_{n-1}},
\end{equation*}
so that $\det\Lambda_a/\vol(S_a) = (n-1)!a_1\cdot\ldots\cdot
a_{n}$. Hence Lemma \ref{lem:s-covering} and Theorem
\ref{thm:kannan_gen} give the desired bounds.
\end{proof}
\bigskip

\section{Average behaviour}
\label{average}

It will be convenient to define
\begin{equation*}
 X_s(a)=\frac{\frob_s(a)}{(s\cdot a_1\,a_2\cdot\ldots\cdot
   a_n)^{1/(n-1)}}
\end{equation*}
for each $a\in \mathrm{G}(T)$. We start with the proof of  Corollary \ref{cor:main}.
\begin{proof}
For i), we observe that by \eqref{eq:prob_classic} we may assume $s\geq
2$ and by the upper bound of Theorem \ref{thm:main}
we have
\begin{equation}
 X_s(a) \leq
 s^{-\frac{1}{n-1}}X_1(a) + \mathrm{c_n},
\label{eq:average}
\end{equation}
for a dimensional constant $\mathrm{c}_n = \left( (n-1)! \right)^{\frac{1}{n-1}}$. Hence,
\eqref{eq:prob_classic} implies for, say, $D\geq 2\,\mathrm{c}_n$,
\begin{equation*}
\prob\left(
X_s(a)\geq D\right) \ll_n
\frac{1}{s}(D-\mathrm{c}_n)^{-(n-1)}\leq D^{-(n-1)}.
\end{equation*}
Now, in view of \eqref{eq:average_classic}, \eqref{eq:average} also implies that
\begin{equation*}
\frac{1}{\#\mathrm{G}(T)}\sum_{a\in \mathrm{G}(T)} X_s(a) \leq
s^{-\frac{1}{n-1}} \left(\frac{1}{\#\mathrm{G}(T)} \sum_{a\in \mathrm{G}(T)}
X_1(a) \right) + \mathrm{c}_n \ll_n 1.
\end{equation*}
In order to show that the left hand side is also bounded from below by
a constant depending only on $n$, we use the lower bound of Theorem \ref{thm:main} and obtain
\begin{equation*}
\frac{1}{\#\mathrm{G}(T)}\sum_{a\in \mathrm{G}(T)} X_s(a) >
\mathrm{c}_n-s^{-\frac{1}{n-1}} \frac{1}{\#\mathrm{G}(T)}\sum_{a\in
  \mathrm{G}(T)} \frac{a_1+\cdots +a_n}{(a_1\cdot\ldots\cdot a_n)^\frac{1}{n-1}}.
\end{equation*}
The latter sum has already been investigated in \cite{AHH}, where the proof of Proposition 1 shows precisely that
\begin{equation*}
\frac{1}{\#\mathrm{G}(T)}\sum_{a\in
  \mathrm{G}(T)} \frac{a_1+\cdots +a_n}{(a_1\cdot\ldots\cdot a_n)^\frac{1}{n-1}}\leq\mathrm{C}_n\,T^{-\frac{1}{n-1}},
\end{equation*}
for another constant $\mathrm{C}_n$ depending only on $n$. Hence, for
sufficiently large $T$ we obtain
$$\frac{1}{\#\mathrm{G}(T)}\sum_{a\in
  \mathrm{G}(T)} X_s(a)\gg_n 1,$$
which completes the proof of ii).
\end{proof}


\noindent
{\it Acknowledgement.} We would like to thank Matthias Henze, Eva Linke and
Carsten Thiel for helpful comments.

\end{document}